\documentclass{article}
\usepackage{amsmath, amssymb}

\newtheorem{theorem}{Theorem}
\newtheorem{proposition}{Proposition}
\newtheorem{lemma}[theorem]{Lemma}
\newtheorem{definition}{Definition}
\newtheorem{example}{Example}

\title{On some arithmetic conditions of recurrent sequences modulo prime $p$ \footnote{This paper is an output of a research project implemented as part of the Basic Research Program at HSE University.}}

\author{Ilya Vyugin, Sashadhar Dutta}

\begin{document}

\maketitle

\begin{abstract}
We study the $K$-Fibonacci sequence $\mathcal{F}_p$ modulo prime $p$. Cardinalities of sets $|\mathcal{F}_p+\mathcal{F}_p|$ and $|\mathcal{F}_p\cdot\mathcal{F}_p|$ are estimated. We present the method of estimating doubling constant of some $m$-dimensional recurrent sets in $\mathbb{F}_p$.
%study some properties of $m$-dimensional recurrent sequences modulo $p$. 
\end{abstract}

\section{Introdution}

Fibonacci sequence 
$$
F_n = F_{n-1} + F_{n-2} \quad \text{with}\quad F_0 = 0, \, F_1 = 1
$$
is the most famous recurrent sequence. It was established in several different parts of the world. In 400 BC famous Indian poet Pingala wrote about Fibonacci sequence (see \cite{PS}). And also previously in Hindu scriptures, holy book Visnudharmottara Purana, in one chapter it was written a great knowledge on Fibonacci Sequence. %( ref. https://www.cs.umd.edu/~gasarch/BLOGPAPERS/fibfibs.pdf). I am refring these two articles for more details.
%{\bf Short history.}
In Europe it was first established by Fibonacci in 1202. Fibonacci sequence have many important applications in biology, poetry and social life.

%(https://www.cs.umd.edu/~gasarch/BLOGPAPERS/fibfibs.pdf)

We study a generalization of the Fibonacci sequence (see \cite{B-L})
\begin{eqnarray}\label{K-Fib}
F_{n+2}=K F_n+F_{n+1},\qquad F_0=0,\quad F_1=1  
\end{eqnarray}
modulo prime $p$. It is called the $K$-Fibonacci sequence ($K\in\mathbb{F}_p^*$). Note that if $K=1$ then the  sequence (\ref{K-Fib}) is Fibonacci sequence, and if $K=2$ then (\ref{K-Fib}) is a Pell number sequence. Methods of the paper can be applied to a more general class of modulo prime recurrent sequences, including some recurrent sequences of degree $m>2$. To apply the method to a given recurrent sequence, it is necessary to prove the irreducibility of the series of polynomials. We present these calculations for the case of $K$-Fibonacci sequences using Newton polygons technique.

%$K$-Fibonacci period:
% the length of this period, often termed the "Pisano period"
% $\pi_K (p)$, is  $K^2 + 4$ by the factorization of
%and the modulus. Periods are generally smaller for larger m
%and have 1, 2, or 4 zeros (see \cite{B-L}).

%Even Modulo: Every 3rd Fibonacci number is even ($F_3,F_6,F_9$), meaning even moduli have shorter more regular periods because they interact directly with this 3-cycle.
%Odd Modulo 

An important question in the arithmetic number theory is what happens to a set $A$ when it is doubled? This can be additive doubling:
$$
A+A=\{ a+b \mid a,b\in A\},
$$ 
or multiplicative doubling:
$$
A\cdot A=\{ a\cdot b \mid a,b\in A\}.
$$ 
Properties of doubling of a set depend on whether it has an additive or multiplicative structure. 
The sum-product problem posed by Szemerédi is well known:

\medskip

{\it For any $\varepsilon>0$ and for any sufficiently large set $A$ the following
$$
|A+A\cdot A|=\#\{ a+bc\mid a,b,c\in A \}>C|A|^{2-\varepsilon}.
$$
holds with some constant $C$.}

\medskip

We consider a set $\mathbf{X}$ of elements of a recurrent sequence of $m$ dimensions
\begin{eqnarray}\label{rec-rel}
X_{n+m} = \alpha_1X_{n+m-1} + \cdots + \alpha_mX_n,\quad \alpha_1\alpha_m\not=0    
\end{eqnarray}
starting from 
\begin{eqnarray}\label{rec-init}
X_1 = X_1^0, \cdots, X_m = X_m^0    
\end{eqnarray}
where the sequence $X_1,X_2\ldots$ and the coefficients $\alpha_1,\ldots,\alpha_m$ are considered modulo prime $p$. We assume that they are elements of the field $\mathbb{F}_p$. Since the sequence $\{ X_n\}_{n=1}^{\infty}$ is periodic, let us denote by $\mathbf{X}=\{X_n\mid n\in\mathbb{N}\}$ the set of elements of this sequence.
The length of the period of Fibonacci sequence modulo $p$ is called Pisano period $\pi(p)$. 
%We denote by $\pi_K(p)$ the period of $K$-Fibonacci sequence modulo $p$. 
One can read about the properties of $K$-Fibonacci numbers in~\cite{B-L}. Let us denote the set of $K$-Fibonacci numbers modulo prime $p$ by $\mathcal{F}_p$.
%The period of $K$-Fibonacci
%$\pi_K (p)$, is  $K^2 + 4$ by the factorization of
%and the modulus. Periods are generally smaller for larger m
%and have 1, 2, or 4 zeros (see \cite{B-L}).

Let us consider the polynomial:
\begin{eqnarray}\label{poly}
P(X,Y)=\sum_{i,j}a_{ij}X^iY^j.
\end{eqnarray}
We can study the polynomial doubling:
$$
P(\mathbf{X},\mathbf{X})=\{ P(X,Y) \mid X,Y\in\mathbf{X}\}
$$
of the set $\mathbf{X}$. Polynomial doubling generalizes additive and multiplicative doubling. In this paper, we study the cardinality of a polynomial doubling of the $K$-Fibonacci subset $\mathbf{X}$ of $\mathbb{F}_p$. The same problem for multiplicative subgroups is studied in \cite{AV}. The main result of the paper is the following theorem.

\begin{theorem}\label{th-est}
For $|\mathcal{F}_p|<\frac{1}{6}p^{3/4}$ the bounds
$$
|\mathcal{F}_p+\mathcal{F}_p|>C |\mathcal{F}_p|^{4/3},\qquad |\mathcal{F}_p\cdot\mathcal{F}_p|>C |\mathcal{F}_p|^{4/3}
$$
are holds with some absolute constant $C$.
\end{theorem}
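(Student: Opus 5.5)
The plan is to use Binet's formula to turn $\mathcal{F}_p$ into a union of translates of a multiplicative-type set, and then run a Stepanov-type polynomial method, as in the multiplicative subgroup case of \cite{AV}. The characteristic polynomial is $x^2-x-K$. Assume $1+4K\neq 0$ and let $\lambda,\mu$ be its roots, lying in $\mathbb{F}_p$ or in $\mathbb{F}_{p^2}$. Then
$$
F_n=\frac{\lambda^n-\mu^n}{\lambda-\mu},\qquad \lambda\mu=-K .
$$
So $F_n$ is a fixed rational function of $t=\lambda^n$ of the form $F_n=(t-(-K)^n t^{-1})/(\lambda-\mu)$, and the value of $(-K)^n$ depends only on $n$ modulo $\mathrm{ord}(-K)$. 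Splitting $n$ into residue classes modulo $2\,\mathrm{ord}(-K)$ (or modulo a bounded number if that is more convenient), we get a decomposition
$$
\mathcal{F}_p=\bigcup_j\{\varphi_j(t): t\in G_j\},
$$
where each $G_j$ is a coset of a cyclic subgroup of $\mathbb{F}_{p^2}^*$ and $\varphi_j(t)=c_j(t-d_jt^{-1})$ is a degree-two rational map. It suffices to bound $|\varphi(G)+\varphi(G)|$ and $|\varphi(G)\cdot\varphi(G)|$ from below for one piece that carries a positive proportion of $\mathcal{F}_p$; if the number of pieces is large, I would instead pigeonhole to a single large piece. The degenerate case $1+4K=0$ (a repeated root, $F_n=n\lambda^{n-1}$) is handled separately and directly.

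For a fixed piece, I estimate the number of solutions to $\varphi(x)+\varphi(y)=s$ (respectively $\varphi(x)\varphi(y)=s$) with $x,y\in G$, uniformly over $s$. The key bound I aim for is
$$
\#\{(x,y)\in G^2:\ \varphi(x)\ast\varphi(y)=s\}\ \ll\ |G|^{2/3}
$$
for every $s$ with at most a few exceptions, where $\ast$ stands for $+$ or $\cdot$. Given this, Cauchy–Schwarz, or simply counting the $|G|^2$ pairs, gives
$$
|\varphi(G)\ast\varphi(G)|\ \gg\ |G|^2/|G|^{2/3}=|G|^{4/3}.
$$
To prove the uniform bound, clear denominators so that the equation becomes a curve $Q_s(x,y)=0$. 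The count is then the number of $x\in G$ for which some $y\in G$ lies on this curve. Following Stepanov's method as adapted in \cite{AV}, I construct an auxiliary polynomial
$$
\Psi(x)=\sum_{a,b}c_{ab}\,x^{a}\,y(x)^{b\,|G|}
$$
and use the relations $x^{|G|}=\mathrm{const}$ and $y^{|G|}=\mathrm{const}$ on $G$, together with the curve equation, to force $\Psi$ to vanish to order about $|G|^{1/3}$ at every solution. The construction must ensure that $\Psi$ is not identically zero, and its degree is about $|G|^{4/3}$. Comparing the number of zeros counted with multiplicity against the degree gives the count $|G|^{2/3}$. The hypothesis $|\mathcal{F}_p|<\frac16 p^{3/4}$ is exactly what keeps the degree of $\Psi$ below $p$, so that derivatives (or Hasse derivatives) behave and the vanishing-order argument works in characteristic $p$.

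The main obstacle is proving that $\Psi\not\equiv 0$. This reduces to showing that a family of polynomials in $x$ — essentially $Q_s(x,y)$ after eliminating $y$ through the rational parametrization, for every $s$ — is irreducible and not of the form $x^{a}\cdot(\text{a polynomial in } x^{|G|})$. This is the irreducibility statement mentioned in the introduction. I would attack it with Newton polygons: for $\varphi(x)+\varphi(y)=s$, clearing denominators gives a polynomial in $x,y$ whose Newton polygon is a quadrilateral with vertices coming from the terms $x^2y$, $xy^2$, $y$, $x$, plus a constant term times $s$. I would check that the edges have coprime lattice lengths except for finitely many values of $s$, and show those exceptional values contribute at most $O(|G|)$ solutions, which is harmless. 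The same check applies to the multiplicative equation. Finally, collecting the pieces and absorbing the losses into the constant gives the bounds $C|\mathcal{F}_p|^{4/3}$.
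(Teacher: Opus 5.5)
Your architecture matches the paper's: Binet's formula, a bounded-degree curve over a multiplicative group, a $|G|^{2/3}$ bound for its points, counting the $|G|^2$ pairs, and Newton polygons for irreducibility. The only structural difference is that the paper quotes Theorems~\ref{th1} and~\ref{th2} instead of redoing Stepanov's method. However, there is a genuine gap in the decomposition, and the irreducibility step does not work as you describe it.

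\textbf{The decomposition.} With $\lambda\mu=-K$ you need $2\,\mathrm{ord}(-K)$ residue classes. Nothing in your argument bounds $\mathrm{ord}(-K)$; it divides the period $T$ and is not bounded in general. Each piece then carries only about $T/\mathrm{ord}(-K)$ indices. Pigeonholing therefore gives only $|\mathcal{F}_p\ast\mathcal{F}_p|\gg(|\mathcal{F}_p|/\mathrm{ord}(-K))^{4/3}$. Summing solution counts over all pairs of pieces still loses a power of $\mathrm{ord}(-K)$, so the constant is not absolute.

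The paper uses only the two parity classes. What makes this work is that each parity subsequence has characteristic roots with $\lambda_1=\lambda_2^{-1}$. Hence $\mathcal{F}_p'=\{\alpha x+\beta x^{-1}: x\in G\}$ (i.e.\ $k_1=1$, $k_2=-1$) with $|G|\le 2|\mathcal{F}_p'|$, and the larger half has at least $|\mathcal{F}_p|/2$ elements. In your notation this is the case $\lambda\mu=-1$: the recurrences $F'_{k+2}=(K^2+2)F'_{k+1}-F'_k$ used in the proof are those of $F_{n+2}=KF_{n+1}+F_n$.

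For the recurrence exactly as displayed in (\ref{K-Fib}), the even-index roots $\lambda^2,\mu^2$ have product $K^2$. So the parity split would not give $\lambda_1=\lambda_2^{-1}$ there either. With root product $-1$ your decomposition needs only two pieces. Your treatment of large $\mathrm{ord}(-K)$, however, does not yield the stated bound.

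\textbf{The irreducibility step.} Your plan fails as stated, because the Newton polygon of $Q_s$ does not depend on $s\neq0$. The $s$-term is $sxy$, an interior point, not a constant term. In both cases the polygon is decomposable:
\begin{itemize}
\item For the sum, it is the square with vertices $(1,0),(2,1),(1,2),(0,1)$. All its edges have lattice length $1$, yet it is the Minkowski sum of $[(0,0),(1,1)]$ and $[(1,0),(0,1)]$.
\item For the product, it is the $2\times2$ square, whose edges have lattice length $2$ and which decomposes in several ways.
\end{itemize}
So no edge-length test applies. As in Lemmas~\ref{P1-irr} and~\ref{P2-irr}, you must list the decompositions and rule out each factorization by comparing coefficients:
\begin{itemize}
\item For the sum, $(Ax+By)(D+Exy)$ has no $xy$ term, so every $s\neq0$ is fine.
\item For the product, a quadratic in $x$ times a quadratic in $y$, or a rectangle times a unit segment, is impossible. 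A product of two bilinear factors occurs only for finitely many $s$.
\end{itemize}

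\textbf{Nondegeneracy.} Irreducibility alone does not give the $|G|^{2/3}$ bound: $y-x^2$ is irreducible and has $|G|$ zeros on $G\times G$. You also need the nondegeneracy hypotheses of Theorems~\ref{th1} and~\ref{th2}. Here they read $Q_s^{\sharp}=-cd(x+y)$ for the sum, and $Q_s(0,0)\neq0$, $\deg_x Q_s(x,0)=2$ for the product. These conditions, not a condition on powers $x^{|G|}$, are what must be verified.
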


\section{Background}

We study $m$-dimensional recurrent sequences (\ref{rec-rel}) with initial data (\ref{rec-init}) modulo prime $p$.
%\begin{eqnarray}\label{rec-rel}
%X_{n+m} = a_1X_{n+m-1} + \cdots + a_mX_n    
%\end{eqnarray}
%starting from 
%\begin{eqnarray}\label{rec-init}
%X_1 = X_1^0, \cdots, X_m = X_m^0    
%\end{eqnarray}
%where the sequence $X_1,X_2\ldots$ is considered modulo prime $p$.  The coefficients $a_1,\ldots,a_m$ and $X_n$, $n\in\mathbb{Z}$ belong to $\mathbb{F}_p$. Since the sequence $\{ X_n\}_{n=1}^{\infty}$ is periodic, let us denote by $\mathbf{X}=\{X_n\mid n\in\mathbb{N}\}$ the set of elements of this sequence. Let us denote by $|\mathbf{X}|=t$ the length of the period of $\{X_n\}$. 
A recurrent sequence is determined by initial data and by roots $\lambda_i$, $i=1,\ldots,m$ of its characteristic polynomial:
\begin{eqnarray}\label{char-poly}
\lambda^m - \alpha_1\lambda^{m-1} - \cdots - \alpha_{m-1}\lambda - \alpha_m = 0.
\end{eqnarray}
We assume that there is $\mu\in \overline{\mathbb{F}}_p$ such that 
$$
\lambda_i=\mu^{k_i},\quad k_i\in\mathbb{Z},\quad i=1,\ldots,m,
$$
all $\lambda_i$ are pairwise distinct and $k_1,\ldots,k_m$ have no common divisor.
%We call such a number of roots $\lambda_1,\ldots,\lambda_m$ {\it power dependent}.
%Our results occur under restricted $k_i$, $i=1,\ldots,m$. A primitive root always exists, but the powers can be larger and our bounds will be meaningless.

Elements of a recurrent sequence are expressed in the following form:
$$
X_n = \alpha_1\lambda_1^n + \cdots + \alpha_m\lambda_m^n,
$$
where 
$$
\left(\begin{array}{ccc}
\alpha_1 \\
\vdots \\
\alpha_m
\end{array}\right)=\left(\begin{array}{ccc}
1 & \ldots & 1 \\
\vdots & & \vdots \\
\lambda_1^{m-1} & \ldots & \lambda_n^{m-1}
\end{array}\right)^{-1}\left(\begin{array}{ccc}
X_1^0 \\
\vdots \\
X_m^0
\end{array}\right).
$$
In our assumptions, we have
$$
X_n = \alpha_1\mu^{nk_1} + \cdots + \alpha_m\mu^{nk_m}.
$$
Powers of the element $\mu$ generates the subgroup $G=\{ \mu^n \mid n\in\mathbb{Z}\}\subset\mathbb{F}_{p^s}^*$ of the algebraic extension of $\mathbb{F}_p$ by the characteristic polynomial (\ref{char-poly}) of order $|G|$ ($s\leqslant m$). Consequently, we have
$$
X_n=\alpha_1 x^{k_1}+\ldots+\alpha_m x^{k_m},\quad x=\mu^n\in G.
$$
The set $\mathbf{X}$ can be expressed as follows
$$
\mathbf{X}=\{ \alpha_1 x^{k_1}+\ldots+\alpha_m x^{k_m}\mid x\in G \}.
$$
Without loss of generality, we assume that $k_1\geqslant \ldots\geqslant k_m$.

\begin{lemma}
The following bounds
$$
|\mathbf{X}|\leqslant |G|,\qquad |G|\leqslant \max (k_1,k_1-k_m)|\mathbf{X}|.
$$
are satisfied.
\end{lemma}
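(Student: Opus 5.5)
The plan is to view $\mathbf{X}$ as the image of $G$ under the Laurent polynomial map
$$
f(x)=\alpha_1x^{k_1}+\ldots+\alpha_mx^{k_m},\qquad \mathbf{X}=f(G),
$$
and then bound the fibres of $f$ restricted to $G$.

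The first inequality is immediate. $\mathbf{X}$ is the image of the finite set $G$ under a map, so $|\mathbf{X}|\leqslant|G|$. Equivalently, the sequence $X_n=f(\mu^n)$ depends only on $\mu^n$, so it has at most $|G|$ distinct values.

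For the second inequality it suffices to show that every fibre $f^{-1}(c)\cap G$ with $c\in\mathbf{X}$ has at most $D:=\max(k_1,k_1-k_m)$ elements. Then
$$
|G|=\sum_{c\in\mathbf{X}}|f^{-1}(c)\cap G|\leqslant D|\mathbf{X}|.
$$
Fix $c$ and clear denominators.

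\emph{Case $k_m\geqslant 0$.} All exponents are nonnegative and $f(x)-c$ is an honest polynomial of degree $k_1$. It has at most $k_1$ roots in the field $\mathbb{F}_{p^s}$, provided it is not the zero polynomial.

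\emph{Case $k_m<0$.} Multiply by $x^{-k_m}$, which is harmless since $0\notin G$. This gives the polynomial
$$
x^{-k_m}\bigl(f(x)-c\bigr)=\sum_i\alpha_i x^{k_i-k_m}-c\,x^{-k_m},
$$
of degree $k_1-k_m$. Its roots in $G$ are exactly the elements of the fibre. Hence the fibre has at most $k_1-k_m$ points, again provided the polynomial is nonzero.

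In both cases the fibre has at most $D$ elements.

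The main obstacle is the nonvanishing of $f(x)-c$ as a Laurent polynomial. Its leading coefficient is $\alpha_1$, attached to the top exponent $k_1$, which is distinct from the others since the $\lambda_i$ are pairwise distinct and hence so are the $k_i$. So $f-c$ is nonzero as long as some $\alpha_i$ with $k_i\neq0$ is nonzero. This condition must be addressed separately. It can fail only in the degenerate situation where $X_n$ is constant, in which case $|\mathbf{X}|=1$ and $|G|$ could be large.

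The route I would take is to note that the relevant coefficients $\alpha_i$ (the combination coefficients, not the recurrence coefficients) are tacitly assumed nonzero. Such a term can then be taken as the extremal one, or else $k_1$ and $k_m$ can be redefined as the largest and smallest exponents actually present, which only decreases $D$. I would state explicitly that the sequence is nondegenerate (non-constant). Under that assumption the root-counting argument above completes the proof.
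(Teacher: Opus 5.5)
Your argument is the paper's: write $\mathbf{X}=f(G)$, clear negative powers with the factor $x^{\max(0,-k_m)}$, and bound each fibre by the number of roots of the resulting polynomial. Your two additions improve on the paper's two-line proof. The roots are counted in $\mathbb{F}_{p^s}$, where $G$ lives, rather than in $\mathbb{F}_p$. And $f-c\not\equiv 0$ genuinely requires the non-constancy hypothesis you isolate: for the zero sequence $|\mathbf{X}|=1$ while $|G|$ can be large. You do not need to redefine $k_1,k_m$ for this, since a nonzero polynomial of degree at most $D$ has at most $D$ roots, whichever $\alpha_i$ vanish.

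One step fails, however, and the paper glosses over exactly the same step. In the case $k_m<0$ you assert that $\sum_i\alpha_i x^{k_i-k_m}-c\,x^{-k_m}$ has degree $k_1-k_m$. The exponents present are $k_i-k_m\leqslant k_1-k_m$ and $-k_m$, so the degree is at most $\max(k_1,0)-k_m$. When $k_1<0$ and $c\neq0$ it equals $-k_m>k_1-k_m$. Nothing in the setup excludes $k_1<0$, since only the ordering, distinctness and coprimality of the $k_i$ are imposed. In that case the inequality is actually false. For $m=1$, $k_1=-1$ it reads $|G|\leqslant 0$. For $(k_1,k_2)=(-1,-2)$ it asserts that $\alpha_1x^{-1}+\alpha_2x^{-2}$ is injective on $G$. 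For $G=\mathbb{F}_p^*$ with $p\geqslant5$ it is not: in $u=x^{-1}$ the map is $\alpha_1u+\alpha_2u^2$, which takes equal values at $u$ and $-\alpha_1/\alpha_2-u$. You need to add the normalization $k_1\geqslant 0$, which costs nothing because replacing $\mu$ by $\mu^{-1}$ negates and reverses the exponents. Equivalently, your root count proves $|G|\leqslant(\max(k_1,0)-\min(k_m,0))|\mathbf{X}|$ with no sign assumption, and this coincides with $\max(k_1,k_1-k_m)|\mathbf{X}|$ exactly when $k_1\geqslant 0$. In the paper's application $(k_1,k_2)=(1,-1)$, so nothing downstream is affected.
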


{\it Proof.} The first bound is obvious. The given $X$ can be represented by polynomial
$$
(\alpha_1 x^{k_1}+\ldots+\alpha_m x^{k_m}-X)x^{\max(0,-k_m)}=0.
$$
For any $X$ there are not more than $\max (k_1-k_m,k_1)$ different $x\in\mathbb{F}_p$, because $\max(k_1,k_1-k_m)$ is a degree of the polynomial.
$\Box$

%Let the polynomial (\ref{poly}) have a bi-degree $(d_1,d_2)$ and a total degree $d$.

%Let us define the polynomial:
%\begin{eqnarray}
%Q(x,y)=P(\alpha_1 x^{k_1}+\ldots+\alpha_m x^{k_m},\alpha_1 y^{k_1}+\ldots+\alpha_m y^{k_m})    
%\end{eqnarray}
%and assume that this polynomial is irreducible. 

%\begin{lemma}
%Bound of the number of $(X,Y)\in \mathbf{X}\times \mathbf{X}$ such that
%\begin{eqnarray}\label{equa-c}
%P(X,Y)=c.
%\end{eqnarray}
%\end{lemma}

\subsection{The number of solutions of a polynomial equation in a subgroup}

There are several known bounds of the number of solutions of a polynomial equation in a subgroup. We use the estimates of Theorem 2 from \cite{MV} and Theorem 1.2 from \cite{KSV}, since they differ from the bound of Corvaja and Zannier (see \cite{C-Z}) only in a multiplicative constant, and our estimates depend on the degree of the polynomial, and not on the genus of its algebraic curve.

For a bi-variate absolutely irreducible polynomial (irreducible over $\overline{\mathbb{F}}_p$)
\begin{equation}
\label{eq:P}
P(X,Y)=\sum_{i+j\le  d} a_{ij}X^{i}Y^{j} \in \mathbb{F}_p[X,Y]
\end{equation}
of bi-degree $(d_1,d_2)$ and total degree $\deg P= d$ ($d_1+d_2\geqslant d$), we 
define $P^{\sharp}(X,Y)$ as the homogeneous polynomial  of degree 
$d^{\sharp}=\min\{ i+j:~a_{ij}\ne 0\}$ 
given by \begin{equation}
\label{eq:Pmin}
P^{\sharp}(X,Y)=\sum_{i+j=d^{\sharp}} a_{ij}X^{i}Y^{j}.
\end{equation}
%Let us call the irreducible polynomial (\ref{eq:P}) 

Let us formulate Theorem 2 from \cite{MV} and Theorem 1.2 from \cite{KSV} in the case $h=1$. We also substitute $d_1+d_2$ instead of $g$ ($g$ is less than $d_1+d_2$ anyway).

%Let us denote by $g_1$ the following constant:
%\begin{eqnarray}\label{g-gcd}
%g_1={\rm gcd}\{ j_1-j_2\mid \exists i_1,i_2\, :\, a_{i_1j_1}a_{i_2j_2}\not= 0 \}.
%\end{eqnarray}
%and by $g_2$ is the constant 
%\begin{equation}
%\label{eq:gk}
%g_2=\gcd\{ i_1+j_1-i_2 -j_2 ~:~ a_{i_1,j_1}  a_{i_2,j_2}  \ne 0\}.  
%\end{equation}
%It is easy to see that $g_1, g_2\le d$. 

%Let us introduce the set of polynomials $\mathcal{P}$:
%$$
%\mathcal{P}=\{ P_{q',q''}(X,Y)\mid P_{q',q''}=P(q'X,q''Y),\, q',q''\in\mathbb{F}_p^*\}
%$$
%and the subset
%$$
%P_k(x,y)=P(q_k'x,q_k''y),\quad k=1,\ldots,h.
%$$
%We call these polynomials 
%$G$-independent if for any integers $1\leqslant i<j\leqslant h$ ratios $q_i'/q_j'$ and $q_i''/q_j''$ do not belong to $G$ simultaneously. 

Let us put %by definition 
\begin{eqnarray}\label{M2}
\mathcal{N}=\{ (x,y)\in G\times G \mid P(x,y)=0\}.
\end{eqnarray}
%In other words, $\mathcal{N}_h$ is the set of solutions $(x,y)\in\bigcup_{k=1}^hG_k^1\times G_k^2$ of equation (\ref{equa-c}), where $G_k^1=q_k'G$, $G_k^2=q_k''G$.

The following two theorems generalize the estimate of Corvaja and Zannier (see~\cite{C-Z}). %and references therein. 

\begin{theorem}[see \cite{MV}]\label{th1}
Consider the following assumptions:
\begin{itemize}
  \item $P(x,y)\in\overline{\mathbb{F}}_p[x,y]$ is an absolutely irreducible polynomial (\ref{eq:P}) having bidegree $(d_1,d_2)$ such that $P(0,0)\not=0$ and $\deg_x P(x,0)\geqslant 1$, $d_2\geqslant 1$;
%    \item polynomials
%$P_1,\ldots,P_h\in\mathcal{P}$ are $G$-independent;
    \item $G$ is a subgroup of $\mathbb{F}_{p^s}^*$ such that 
  $10^3<|G|<\frac{1}{3}p^{3/4}$. %where $h<(40d_1d_2^2)^{-3}|G|^2$.
\end{itemize}
Then the following bound
\begin{eqnarray}\label{est-M1}
\#\mathcal{N}\leqslant 12d_1d_2(d_1+d_2)^2|G|^{2/3} %16mn^2(m+n)|G|^{2/3}
\end{eqnarray}
holds.
\end{theorem}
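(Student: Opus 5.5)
Since Theorem~\ref{th1} is quoted from \cite{MV}, we only sketch how we would reprove it. We would use Stepanov's method, as in Corvaja--Zannier \cite{C-Z}. Put $t=|G|$, so that every $x\in G$ satisfies $x^t=1$. We look for an auxiliary polynomial
$$
\Psi(x,y)=\sum_{0\le a<A}\ \sum_{0\le b<B} f_{ab}(x,y)\,x^{at}y^{bt},
$$
where each $f_{ab}$ has $\deg_x f_{ab}<D_1$, $\deg_y f_{ab}<D_2$ and is reduced modulo $P$ (degree in $y$ less than $d_2$). We require $\Psi$ to vanish to order at least $D$ at every point of $\mathcal{N}$ on the curve $C=\{P=0\}$. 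Order of vanishing is measured with the derivation $\partial=P_y\partial_x-P_x\partial_y$, which is tangent to $C$.

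The key observation is that $\partial(x^{at}y^{bt})=x^{at}y^{bt}\cdot t\,(\ldots)$. On $G\times G$ we moreover have $x^{at}=y^{bt}=1$. Hence the conditions $\partial^k\Psi=0$ on $\mathcal{N}$, for $k<D$, reduce to the vanishing of the polynomials
$$
\Phi_k=\sum_{a,b}\partial^k f_{ab}\,(\text{twisted by }at,bt)\ \bmod P .
$$
Each $\Phi_k$ has degree about $D_1+D_2+k(d_1+d_2)$. Requiring that all of them vanish identically modulo $P$ gives a homogeneous linear system in the coefficients of the $f_{ab}$. The system has roughly $D\,(D_1+D_2+D(d_1+d_2))\,d_2$ equations and $AB\,D_1 d_2$ unknowns. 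We choose $A,B,D_1,D_2,D$ so that there are more unknowns than equations, which gives a nonzero solution. Then every point of $\mathcal{N}$ is a zero of $\Psi|_C$ of multiplicity at least $D$ (we exclude the finitely many singular points or points with $P_y=0$, at a cost of $O(d^2)$).

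The main obstacle is to show that $\Psi\not\equiv0$ modulo $P$. Equivalently, the functions $x^{at}y^{bt}$ with $a<A$, $b<B$ must be linearly independent over the low-degree functions $f$ on $C$. First we would handle the degenerate case. The hypotheses $P(0,0)\ne0$, $\deg_xP(x,0)\ge1$ and $d_2\ge1$ guarantee that $x$ and $y$ are not multiplicatively dependent on $C$ (there is no relation of the form $x^uy^v=\text{const}$). Next we would compare orders of poles and zeros of $\sum f_{ab}x^{at}y^{bt}$ at the places of $C$ above $x=0$ and $y=0$, in the style of Corvaja--Zannier. A nontrivial cancellation is forced to occur at some place. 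That cancellation would give an identity whose degree is at most about $AB(D_1+D_2)d$, and this is less than $p$ precisely because $t<\frac13p^{3/4}$ and $A,B,D_1\sim t^{1/3}$. This step is where characteristic $p$ can cause trouble, since $t$-th powers could collapse. We would try first a Wronskian-type argument in the derivation $\partial$, valid because all relevant degrees stay below $p$.

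Finally we count zeros. By Bézout, the number of zeros of $\Psi$ on $C$, counted with multiplicity, is at most $\deg\Psi\cdot\deg P\lesssim (At+Bt+D_1+D_2)(d_1+d_2)$. Therefore
$$
D\cdot\#\mathcal{N}\le (A+B)\,t\,(d_1+d_2)+O(d^2).
$$
We take $A=B\asymp t^{1/3}$, $D_1\asymp t^{2/3}$ and $D\asymp t^{2/3}/(d_1+d_2)$, scaled so that the linear system stays solvable. This gives $\#\mathcal{N}\ll d_1d_2(d_1+d_2)^2t^{2/3}$. Keeping track of the constants, with the lower bound $t>10^3$ absorbing the rounding errors, yields the constant $12$ in (\ref{est-M1}).
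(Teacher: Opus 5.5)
The paper does not prove Theorem~\ref{th1}; it quotes it from \cite{MV}, where it is obtained by a version of Stepanov's method. Your framework is therefore the standard one: the auxiliary function $\Psi=\sum f_{ab}x^{at}y^{bt}$, high-order vanishing on $\mathcal N$ measured with the tangent derivation $P_y\partial_x-P_x\partial_y$, the dimension count, and B\'ezout. The counting steps are essentially sound.

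\textbf{The gap is the non-vanishing step.} This is the step you yourself flag as the main obstacle: showing that a nontrivial solution of the linear system gives $P\nmid\Psi$. This lemma carries the whole content of the method, and what you write for it is a plan, not a proof. Multiplicative independence of $x$ and $y$ on $C$ is purely qualitative. It does not exclude $\sum f_{ab}x^{at}y^{bt}\equiv 0\pmod P$ with low-degree $f_{ab}$. Likewise, ``a cancellation is forced at some place, giving an identity of degree $<p$'' does not say which identity arises or why it is contradictory.

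What is needed is a quantitative statement. One possible route is the following.
\begin{itemize}
\item Take a place $\mathfrak P$ of $C$ over a point $(x_0,0)$ with $x_0\neq0$. This is where $P(0,0)\neq0$ and $\deg_xP(x,0)\geqslant1$ genuinely enter.
\item Write $\Psi=\sum_b y^{bt}F_b$ with $F_b=\sum_a f_{ab}x^{at}$. The summands have pairwise distinct orders at $\mathfrak P$ provided $\mathrm{ord}_{\mathfrak P}F_b<t\,\mathrm{ord}_{\mathfrak P}y$ for every $F_b\neq0$.
\item Since $x$ is a unit at $\mathfrak P$, you must bound $\mathrm{ord}_{\mathfrak P}F_b$, and show $F_b\not\equiv0$ whenever some $f_{ab}\neq0$. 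This requires a Heath-Brown--Konyagin or Wronskian-type estimate for the order of vanishing of $\sum_{a<A}g_a\,(1+u)^{at}$ in characteristic $p$.
\item Such an estimate holds only while the relevant degrees and orders stay below $p$. This is exactly where a restriction like $ABD_1\ll p$, i.e.\ $t\ll p^{3/4}$, must be used.
\end{itemize}
None of this is supplied. As it stands, the argument does not show that $\Psi$ is nonzero on $C$, and without that the B\'ezout count proves nothing.

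\textbf{Smaller points.}
\begin{itemize}
\item Your parameters are inconsistent. The dimension count needs $D_1\asymp t^{2/3}$, but in the non-vanishing step you take $D_1\sim t^{1/3}$. With $D_1\asymp t^{2/3}$, the quantity $AB(D_1+D_2)d\asymp t^{4/3}d$ is not below $p$ under the $d$-independent hypothesis $t<\frac13p^{3/4}$. That fails unless $A,B,D_1$ carry negative powers of $d$, and those must then be fed back into the final count.
\item Consequently the constant $12$ and the threshold $10^3$ are asserted, not derived.
\item The $O(d^2)$ excluded singular points should be added to $\#\mathcal N$ after dividing by $D$, not to $D\,\#\mathcal N$.
\end{itemize}
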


\begin{theorem}[see \cite{KSV}]\label{th2}
Consider the following assumptions:
\begin{itemize}
  \item $P(x,y)\in\overline{\mathbb{F}}_p[x,y]$ is an absolutely irreducible polynomial (\ref{eq:P}) having bidegree $(d_1,d_2)$ and $P^{\sharp}(X,Y)$ consists of at least two monomials;
%    \item polynomials $P_1,\ldots,P_h\in\mathcal{P}$ are $G$-independent;
    \item $G$ is a subgroup of $\mathbb{F}_{p^s}^*$ such that
$$
c_0(d_1,d_2)\leqslant |G|\leqslant \frac{1}{2}p^{3/4},
$$ 
with a constant $c_0(d_1,d_2)$, depending only on $d_1$ and $d_2$
\end{itemize}
Then the following bound
\begin{eqnarray}\label{est-M1}
\#\mathcal{N}\leqslant 12d_1d_2(d_1+d_2)^2|G|^{2/3} %16mn^2(m+n)|G|^{2/3}
\end{eqnarray}
holds.
%Suppose that $P$ is an absolutely irreducible polynomial,
%$$
%\deg_X P = d_1 \text{ and } \deg_Y P = d_2
%$$
%and also  that $P^{\sharp}(X,Y)$ consists of at least two monomials. 
%There exists a constant $c_0(m,n)$, depending only on $m$ and $n$, 
%such that for any  multiplicative  subgroup  $\mathcal{G} \subseteq \mathbb{F}_p^*$ satisfying 
%$$
%\frac{1}{2}p^{3/4}h^{-1/4}\ge  |G|\ge \max\{h^2, c_0(m,n)\},
%$$ 
% and 
% $\mathcal{G}$-independent polynomials~\eqref{eq:Pk}  we have
% $$
%\sum_{i=1}^h\#\left\{ (u,v)\in \mathcal{G}^2 ~:~P_i(u,v)=0\right\} <12mn(m+n) g h^{2/3}|G|^{2/3}.
%$$
\end{theorem}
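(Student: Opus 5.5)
The plan is to prove this by Stepanov's auxiliary polynomial method, following the Corvaja–Zannier argument but keeping track of the degrees $d_1,d_2$ rather than the genus. Put $t=|G|$. Every point $(x,y)\in\mathcal{N}$ satisfies $x^t=y^t=1$. Work on the curve $\mathcal{C}:P(x,y)=0$, which is irreducible over $\overline{\mathbb{F}}_p$ by assumption, and on its function field $\overline{\mathbb{F}}_p(\mathcal{C})$. Let $\mathcal{D}$ be the derivation of this field given by $\mathcal{D}=P_y\partial_x-P_x\partial_y$. Since $\mathcal{D}(x^t)=tx^{t-1}\mathcal{D}x$, applying $\mathcal{D}$ repeatedly to a monomial $x^{a+ct}y^{b+et}$ gives $x^{ct}y^{et}$ times a polynomial in $x,y$ of controlled degree. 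At points of $\mathcal{N}$ the factor $x^{ct}y^{et}$ equals $1$.

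\textbf{Step 1 (construction).} Consider
$$
\Phi(x,y)=\sum_{a<A,\ b<B,\ 0\le c,e<C}\lambda_{abce}\,x^{a}y^{b}x^{ct}y^{et}
$$
with unknown coefficients $\lambda_{abce}$. We require that $\mathcal{D}^k\Phi$ vanish at every point of $\mathcal{N}$ for $k<D$. On $G\times G$, each expression $\mathcal{D}^k\Phi$ becomes a polynomial $R_k(x,y)$ with $\deg R_k\le A+B+k(d_1+d_2)$, and its coefficients are linear in the $\lambda$'s. Reducing modulo $P$, we choose the $\lambda$'s so that every $R_k$ is divisible by $P$, that is, $R_k\equiv 0$ in $\overline{\mathbb{F}}_p[\mathcal{C}]$. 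This gives about $\sum_{k<D}(d_1+d_2)(A+B+kd)$ linear conditions against $ABC^2$ unknowns. Choosing $A\asymp B\asymp t^{2/3}$, $C\asymp t^{1/3}$ and $D\asymp t^{1/3}$, with constants tuned to the factor $12d_1d_2(d_1+d_2)^2$, makes the number of unknowns exceed the number of conditions, so a nontrivial solution exists. Then $\Phi$ vanishes to order at least $D$ at each point of $\mathcal{N}$. Some care is needed at singular points and at points where $\mathcal{D}x=0$; there are $O(d^2)$ such points, which are absorbed in the constant.

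\textbf{Step 2 (non-vanishing).} This is the main obstacle. We need $\Phi\not\equiv 0$ on $\mathcal{C}$, which amounts to showing that the functions $x^{a+ct}y^{b+et}$ with $a<A$, $b<B$, $c,e<C$ are linearly independent in $\overline{\mathbb{F}}_p(\mathcal{C})$. I would try to prove this by a valuation argument at the points of $\mathcal{C}$ over $(0,0)$. This is where the hypothesis that $P^{\sharp}$ has at least two monomials enters: the branches of $\mathcal{C}$ through the origin then have tangent behaviour in which both $x$ and $y$ have positive order, with a ratio prescribed by the Newton polygon. Comparing leading terms of the Puiseux expansions in $x$ and $y$ then separates the exponent pairs $(a+ct,b+et)$, because $t$ is much larger than $A$ and $B$. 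The bound $t\le\frac12p^{3/4}$ is used here: it ensures that all exponents involved, of size $O(Ct)=O(t^{4/3})$ and with derivative orders $D$, stay below $p$. Without that, Frobenius could make $x^{p}$-type relations collapse the independence, or make $\mathcal{D}^k$ degenerate. The lower bound $t\ge c_0(d_1,d_2)$ absorbs rounding in the parameter choices.

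\textbf{Step 3 (counting).} Since $\Phi\not\equiv0$ on $\mathcal{C}$, Bézout's theorem, or the degree of the divisor of $\Phi$ on $\mathcal{C}$, gives
$$
D\cdot\#\mathcal{N}\le \deg\Phi\cdot\deg P\lesssim (d_1+d_2)(A+B+2Ct).
$$
Here one should use bidegrees $(A+Ct)d_2+(B+Ct)d_1$ to get the $d_1d_2$ factor. Dividing by $D\asymp t^{1/3}$ yields $\#\mathcal{N}\le 12d_1d_2(d_1+d_2)^2t^{2/3}$ once the constants in Step 1 are optimized.
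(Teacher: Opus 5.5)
The paper does not prove this statement; it quotes \cite{KSV} (Theorem~1.2 there, with $h=1$ and $g$ replaced by $d_1+d_2$). Judged on its own, your sketch has a genuine gap, and its parameters do not give the claimed exponent. In Step~3, $C\asymp t^{1/3}$ makes $\deg\Phi\asymp Ct\asymp t^{4/3}$. Dividing by $D\asymp t^{1/3}$ then yields only $\#\mathcal{N}\lesssim t$, which is the trivial bound. To reach $t^{2/3}$ you need multiplicity $D\asymp t^{2/3}$, that is $D\asymp C^2$, as in Heath-Brown--Konyagin. With that choice the number of linear conditions, roughly $d(A+B)D+d^2D^2$, is itself of order $t^{4/3}$.

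This makes Step~2 the whole difficulty, and as stated it is false. Your $ABC^2\asymp t^2$ monomials all have bidegree below $(A+Ct,B+Ct)$. Polynomials of bidegree below $(N_1,N_2)$ restrict to $\mathcal{C}$ as a space of dimension $N_1d_2+N_2d_1-d_1d_2\asymp(d_1+d_2)t^{4/3}$. So the functions $x^{a+ct}y^{b+et}$ are massively linearly dependent on $\mathcal{C}$, and the count in Step~1 gives no control over whether the solution vanishes identically on $\mathcal{C}$.

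You would have to cut the low-degree part down to a basis modulo $P$ (e.g.\ $b<d_2$). That leaves $\asymp d_2AC^2\asymp t^{4/3}$ unknowns, the same order as the number of conditions and as the ambient dimension. You would then need a sharp non-vanishing lemma for that family, using $t^{4/3}\ll p$. That lemma is the real content of \cite{KSV} and is where the hypothesis on $P^{\sharp}$ enters.

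Your valuation argument does not supply it. The hypothesis guarantees a branch through the origin tangent to $y=\theta x$ with $\theta\neq0$. On that branch $\mathrm{ord}(x^{\alpha}y^{\beta})$ depends only on $\alpha+\beta$. The lowest-order term therefore gives only the single relation $\sum\lambda_{abce}\theta^{b+et}=0$ over the indices minimizing $a+b+(c+e)t$, not the vanishing of each $\lambda_{abce}$. Without a genuine mechanism for this step, the proof does not go through.
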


\section{Bounds of a polynomial doubling}

Let $a_{ij}$ be coefficients of the polynomial (\ref{eq:P}) and let $k_1\geqslant\ldots\geqslant k_m$. Let us put 
$$
l_1=\min \{ ik_m\mid \exists j\, :\,a_{ij}\not=0,\, n=1,\ldots,m\}\cup \{ 0\},
$$
$$ 
l_2=\min \{ jk_m\mid \exists i\, :\,a_{ij}\not=0,\, n=1,\ldots,m\}\cup \{ 0\}.
$$
We consider the polynomials
\begin{eqnarray}\label{Poly-Q}
Q(x,y)=x^{-l_1}y^{-l_2}P(\alpha_1 x^{k_1}+\ldots+\alpha_m x^{k_m},\beta_1 y^{k_1}+\ldots+\beta_m y^{k_m})    
\end{eqnarray}
where $\alpha_1\cdot\ldots\cdot\alpha_m\beta_1\cdot\ldots\cdot\beta_m\not=0$ and
$$
Q_r(x,y)=Q(x,y)-rx^{-l_1}y^{-l_2}.
$$

\begin{proposition}\label{Qr0}
Let the polynomial $Q_r(x,y)$ be irreducible, the subgroup $G$ be such that $10^3<|G|<\frac{1}{3}p^{3/4}$ and at least one of the two conditions:
\begin{itemize}
\item $Q_r(0,0)\not=0$ and $\deg_x Q_r(x,0)\geqslant 1$ or $\deg_x Q_r(0,y)\geqslant 1$;
\item polynomial ${Q_r}^{\sharp}(X,Y)$ consists of at least two monomials
\end{itemize}
is satisfied. Then the number of solutions of the equation
$$
P(X,Y)=r,\quad X,Y\in \mathbf{X}
$$
does not exceed 
$$
12(d_1k_1-l_1)(d_2k_1-l_2)(d_1k_1+d_2k_1-l_1-l_2)^2(\max(k_1,k_1-k_m))^{2/3}|\mathbf{X}|^{2/3}.
$$
\end{proposition}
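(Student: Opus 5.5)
The plan is to pull the equation back to the subgroup $G$, count there with Theorem~\ref{th1} or Theorem~\ref{th2}, and then return to $\mathbf{X}$ using the Lemma.

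\emph{Step 1: lifting to $G$.} Every $X\in\mathbf{X}$ has the form $X=\alpha_1x^{k_1}+\ldots+\alpha_mx^{k_m}$ with $x\in G$, and likewise for $Y$, possibly with coefficients $\beta_i$ in the second variable. Choose one such $x$ for each $X$ and one such $y$ for each $Y$. Then every solution $(X,Y)\in\mathbf{X}\times\mathbf{X}$ of $P(X,Y)=r$ gives a distinct pair $(x,y)\in G\times G$ with
$$
P(\alpha_1 x^{k_1}+\ldots+\alpha_m x^{k_m},\beta_1 y^{k_1}+\ldots+\beta_m y^{k_m})=r .
$$
Since $x,y\neq 0$, we may multiply by $x^{-l_1}y^{-l_2}$, which is exactly the equation $Q_r(x,y)=0$. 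Hence the number of solutions in $\mathbf{X}\times\mathbf{X}$ is at most $\#\{(x,y)\in G\times G : Q_r(x,y)=0\}$. This injection already suffices and needs no fibre counting.

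\emph{Step 2: $Q_r$ is a polynomial and its bidegree.} The shift by $l_1,l_2$ is chosen to clear negative powers when $k_m<0$. After multiplying by $x^{-l_1}y^{-l_2}$, the lowest possible power of $x$ is $ik_m-l_1\ge 0$ and the highest is $ik_1-l_1\le d_1k_1-l_1$. So $Q_r$ is a genuine polynomial with $\deg_x Q_r\le d_1k_1-l_1$ and $\deg_y Q_r\le d_2k_1-l_2$. The constant term $-rx^{-l_1}y^{-l_2}$ is also a monomial with nonnegative exponents, since $l_1,l_2\le 0$.

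\emph{Step 3: applying the counting theorems.} By hypothesis $Q_r$ is absolutely irreducible and $10^3<|G|<\frac13p^{3/4}$. In the first case of the alternative, the hypotheses of Theorem~\ref{th1} hold, swapping the roles of $x$ and $y$ if it is the condition on $Q_r(0,y)$ that holds. In the second case, Theorem~\ref{th2} applies, with the range of $|G|$ contained in its admissible range once $c_0\le 10^3$ or the lower bound is absorbed. Either theorem gives
$$
\#\mathcal{N}\le 12D_1D_2(D_1+D_2)^2|G|^{2/3},\qquad D_1=d_1k_1-l_1,\quad D_2=d_2k_1-l_2 .
$$
The right-hand side is monotone in the degrees, so we may use these upper bounds for the bidegree.

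\emph{Step 4: returning to $\mathbf{X}$.} The Lemma gives $|G|\le\max(k_1,k_1-k_m)|\mathbf{X}|$, so $|G|^{2/3}\le(\max(k_1,k_1-k_m))^{2/3}|\mathbf{X}|^{2/3}$. Substituting this yields the stated bound.

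The main obstacle is Step 3, namely checking that the theorems' hypotheses transfer correctly. This includes the degree bookkeeping that keeps $Q_r$ a polynomial, and the remark that the lower threshold on $|G|$ in Theorem~\ref{th2} is compatible with the assumption $|G|>10^3$.
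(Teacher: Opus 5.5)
Your argument is the paper's proof with the details filled in. You lift each solution to a pair $(x,y)\in G\times G$ with $Q_r(x,y)=0$, apply Theorem~\ref{th1} or Theorem~\ref{th2} with bidegree at most $(d_1k_1-l_1,\,d_2k_1-l_2)$, and convert $|G|^{2/3}$ into $|\mathbf{X}|^{2/3}$ via the Lemma; your explicit injection, polynomiality check and monotonicity remark are what the paper leaves implicit. The point you hedge on is a real gap, not a cosmetic one: $10^3<|G|$ does not imply $|G|\geqslant c_0(d_1k_1-l_1,\,d_2k_1-l_2)$, and the paper's proof also applies Theorem~\ref{th2} without checking this, so in the second case that lower bound has to be treated as an extra hypothesis rather than something absorbed.
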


{\it Proof.} Since elements $X_n$ and $Y_l$ of the recurrent sequences $\mathbf{X}$ and $\mathbf{Y}$ are represented as
$$
X_n=\alpha_1 x^{k_1}+\ldots+\alpha_m x^{k_m},\quad x=\mu^n\in G,
$$
$$
Y_l=\beta_1 y^{k_1}+\ldots+\beta_m y^{k_m},\quad y=\mu^l\in G,
$$
where $x,y$ are elements of the group $G$. If the first condition is satisfied, then Theorem \ref{th1} can be applied to the polynomial $Q_r(x,y)$ and the subgroup $G$. Theorem \ref{th1} gives us the bound:
$$
\# \{ (x,y)\in G\times G\mid Q_r(x,y)=0 \}\leqslant 12(d_1k_1-l_1)(d_2k_1-l_2)(d_1k_1+d_2k_1-l_1-l_2)^2|G|^{2/3}
$$
because $m=d_1k_1-l_1$, $n=d_2k_1-l_2$, $|G|\leqslant \max(k_1,k_1-k_m)|\mathbf{X}|$.

If the second condition is satisfied, then Theorem \ref{th2} can be applied to the polynomial $Q_r(x,y)$ and the subgroup $G$. Theorem \ref{th2} gives us the bound:
$$
\# \{ (x,y)\in G\times G\mid Q_r(x,y)=0 \}\leqslant 12(d_1k_1-l_1)(d_2k_1-l_2)(d_1k_1+d_2k_1-l_1-l_2)^2|G|^{2/3}
$$
because $m=d_1k_1-l_1$, $n=d_2k_1-l_2$, $|G|\leqslant  \max(k_1,k_1-k_m)|\mathbf{X}|$. $\Box$

%We have $|G|\leqslant |\mathbf{X}|$ because for each $x\in G$ there is $X=\alpha_1 x^{k_1}+\ldots+\alpha_m x^{k_m}\in \mathbf{X}$.

\begin{proposition}\label{PXX}
If the polynomials $P$ and $Q_r$ with $r\in\mathbb{F}_p\setminus\{ \tilde{r}_1,\ldots,\tilde{r}_l\}$ satisfy to condition of Proposition \ref{Qr0} then we have the bound 
$$
P(\mathbf{X},\mathbf{X})=\#\{ P(X,Y)\mid X,Y\in\mathbf{X} \}>\Theta|\mathbf{X}|^{4/3}
$$
where $\Theta$ depends only on $d$ and $l$.
%$C=(12(d_1k_1-l_1)(d_2k_1-l_2)(d_1k_1+d_2k_1-l_1-l_2)^2 \max^{2/3}(k_1,k_1-k_m) )^{-1}$.
\end{proposition}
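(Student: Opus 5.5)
The plan is a standard counting (pigeonhole) argument. Consider the map $\mathbf{X}\times\mathbf{X}\to P(\mathbf{X},\mathbf{X})$, $(X,Y)\mapsto P(X,Y)$. Partition the $|\mathbf{X}|^2$ pairs according to the value $r=P(X,Y)$:
$$
|\mathbf{X}|^2=\sum_{r\in P(\mathbf{X},\mathbf{X})} N(r),\qquad N(r)=\#\{(X,Y)\in\mathbf{X}\times\mathbf{X}\mid P(X,Y)=r\}.
$$
Split the sum into the ``generic'' values $r\notin\{\tilde r_1,\ldots,\tilde r_l\}$ and the at most $l$ exceptional values $\tilde r_i$.

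For generic $r$, the hypothesis says $Q_r$ satisfies the conditions of Proposition \ref{Qr0}. Hence
$$
N(r)\leqslant C_1|\mathbf{X}|^{2/3},\qquad C_1=12(d_1k_1-l_1)(d_2k_1-l_2)(d_1k_1+d_2k_1-l_1-l_2)^2\max(k_1,k_1-k_m)^{2/3}.
$$
Here $C_1$ depends only on the degrees and the exponents $k_i$. For the exceptional values we only have the trivial bound: for fixed $X$, the equation $P(X,Y)=\tilde r_i$ in $Y$ has at most $d_2$ roots unless $P(X,\cdot)-\tilde r_i$ vanishes identically. This cannot happen for many $X$ unless $P-\tilde r_i$ has a factor depending only on $X$, which we exclude by the irreducibility assumptions (or for which we accept an extra degree factor). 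So $N(\tilde r_i)\leqslant d\,|\mathbf{X}|$, and the exceptional contribution is at most $l d|\mathbf{X}|$.

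Combining,
$$
|\mathbf{X}|^2\leqslant |P(\mathbf{X},\mathbf{X})|\cdot C_1|\mathbf{X}|^{2/3}+ld|\mathbf{X}|.
$$
When $|\mathbf{X}|$ is larger than a constant depending on $l,d$, the last term is at most $\frac12|\mathbf{X}|^2$, which gives $|P(\mathbf{X},\mathbf{X})|\geqslant (2C_1)^{-1}|\mathbf{X}|^{4/3}$. For small $|\mathbf{X}|$ the bound follows trivially from $|P(\mathbf{X},\mathbf{X})|\geqslant 1$ after shrinking $\Theta$. The size restriction $10^3<|G|<\frac13 p^{3/4}$ is inherited from Proposition \ref{Qr0} and is assumed implicitly.

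The main obstacle is the exceptional values. One must make sure that $N(\tilde r_i)$ is genuinely $O(|\mathbf{X}|)$, that is, that $P(X,Y)-\tilde r_i$ does not contain a whole ``vertical'' or ``horizontal'' component which would force $N(\tilde r_i)$ up to order $|\mathbf{X}|^2$. I would handle this by writing $P(X,Y)-\tilde r$ as a polynomial in $Y$ and bounding the number of $X$ for which all its coefficients vanish by $\deg_X P$. That number of $X$ contributes at most $d_1|\mathbf{X}|$ pairs, so the total stays linear in $|\mathbf{X}|$. A second, more cosmetic issue is that $\Theta$ as stated depends only on $d$ and $l$, whereas $C_1$ also involves $k_1,k_m$. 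I would either regard the $k_i$ as fixed data of the recurrence (absorbed into the constant) or note this dependence explicitly.
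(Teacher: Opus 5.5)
Your argument is correct and is essentially the paper's own proof: you partition the $|\mathbf{X}|^2$ pairs by the value $r=P(X,Y)$, bound each generic fibre by $C|\mathbf{X}|^{2/3}$ via Proposition~\ref{Qr0}, bound the at most $l$ exceptional fibres by a trivial bound linear in $|\mathbf{X}|$ (the paper uses $ld_1|\mathbf{X}|$), and then solve for the number of attained values. Two of your points are things the paper passes over silently, so they add care rather than change the route: your justification that the exceptional fibres really are $O(|\mathbf{X}|)$, and your remark that the constant actually depends on $k_1,k_m$ and not only on $d$ and $l$.
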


{\it Proof.} The number of solutions of the equation
$$
Q_r(x,y)=0
$$
is not greater than $C|\mathbf{X}|^{2/3}$ for all $r\in \mathbb{F}_p\setminus\{\tilde{r}_1,\ldots,\tilde{r}_l \}$. The number of pairs $(x,y)\in \mathbf{X}\times \mathbf{X}$ is equal to $|\mathbf{X}|^2$. Consider all such $r_1,\ldots,r_M\in\mathbb{F}_p$ that there exists a pair $(x,y)\in \mathbf{X}\times \mathbf{X}$ such that $Q_{r_j}(x,y)=0$. Any pair $(x,y)\in\mathbf{X}\times \mathbf{X}$ satisfies one of the following equations:
\begin{eqnarray}\label{Qrs}
Q_{r_j}(x,y)=0, \quad j=1,\ldots,M,    
\end{eqnarray}
but the number of solutions for all these equations is not greater than $ld_1|\mathbf{X}|+(M-l)C|\mathbf{X}|^{2/3}\geqslant |\mathbf{X}|^2$. We obtain the bound for the number $M$ of equations (\ref{Qrs}):
$$
M>C^{-1}(|\mathbf{X}|^{4/3}-ld_1|\mathbf{X}|^{1/3})+l>\Theta |\mathbf{X}|^{4/3},
$$
where $C=(12(d_1k_1-l_1)(d_2k_1-l_2)(d_1k_1+d_2k_1-l_1-l_2)^2 \max^{2/3}(k_1,k_1-k_m) )^{-1}$ and $\Theta$ depend only on $d$ and $l$.
$\Box$

To prove the next two lemmas, we will use Newton polygons.
A Newton polygon of the polynomial (\ref{eq:P}) is a polygon in the Cartesian plane that is the convex hull of the points $(i,j)$ for all $a_{ij}\not= 0$. The product of polynomials corresponds to a Newton polygon, which is the Minkowski sum of the Newton polygons of its factors. The Newton polygons of polynomials are located entirely in the first quadrant of the Cartesian plane.

\begin{lemma}\label{P1-irr}
The polynomial 
$$
P_1(x,y)=\alpha x^2 y+\gamma xy^2-rxy+\delta x+\beta y 
$$
with $\alpha\beta\gamma\delta r\not=0$ is irreducible.
\end{lemma}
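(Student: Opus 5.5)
The plan is to use the Newton polygon criterion just described: if $P_1=FG$ over $\overline{\mathbb{F}}_p$, then the Newton polygon of $P_1$ is the Minkowski sum of the Newton polygons of $F$ and $G$. I will enumerate all possible Minkowski decompositions of the Newton polygon of $P_1$ and show that each nontrivial one forces $r=0$.

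\emph{Step 1: the Newton polygon.} The exponents of $P_1$ are $(2,1),(1,2),(1,1),(1,0),(0,1)$, all with nonzero coefficients. Their convex hull $N$ is the square with vertices $(1,0),(2,1),(1,2),(0,1)$, and $(1,1)$ is its only interior lattice point. The edge vectors, taken counterclockwise, are $(1,1),(-1,1),(-1,-1),(1,-1)$, and each is primitive, so every edge has lattice length $1$.

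\emph{Step 2: possible decompositions.} Suppose $N=N_F+N_G$ with $N_F,N_G$ lattice polygons. Every edge of a Minkowski sum is the sum of parallel edges, or vertices, of the summands. Since each edge of $N$ has lattice length $1$, each edge is carried entirely by exactly one summand. Moreover, the edge vectors of each summand must sum to zero, and the only zero-sum subsets of $\{(1,1),(-1,1),(-1,-1),(1,-1)\}$ are:
\begin{itemize}
\item the empty set and the full set;
\item $\{(1,1),(-1,-1)\}$ and $\{(-1,1),(1,-1)\}$.
\end{itemize}

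\emph{The empty/full split.} Here one summand, say $N_G$, is a single point. Then $G$ is a monomial $cx^ay^b$. Since $P_1$ contains the terms $\delta x$ and $\beta y$ with $\delta\beta\ne0$, it is divisible by neither $x$ nor $y$. Hence $a=b=0$ and $G$ is a constant, so the factorization is trivial.

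\emph{The segment split.} Here $N_F$ is a translate of the segment $[(0,0),(1,1)]$ and $N_G$ is a translate of $[(1,0),(0,1)]$. The translations must place the sum exactly at $N$ inside the first quadrant. This forces, up to moving a monomial between the factors (already excluded above),
$$
F=a+bxy,\qquad G=cx+ey.
$$
These segments contain no lattice points other than their endpoints, so $F$ and $G$ are genuinely binomials. Then
$$
FG=ac\,x+ae\,y+bc\,x^2y+be\,xy^2,
$$
whose coefficient of $xy$ is $0$. Comparing with $P_1$ gives $r=0$, contradicting $r\ne0$.

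Therefore $P_1$ admits no nontrivial factorization over $\overline{\mathbb{F}}_p$, i.e.\ it is absolutely irreducible.

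The main obstacle is Step 2: one must argue carefully that the listed splittings exhaust all Minkowski decompositions, including the lattice translation bookkeeping. The rest is a direct coefficient comparison.
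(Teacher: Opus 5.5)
Your proposal is correct and follows the same route as the paper. The Newton square has only one nontrivial Minkowski decomposition, into the two diagonal unit segments, which forces $P_1=(a+bxy)(cx+ey)$; this product has no $xy$ term, contradicting $r\neq 0$. You also make explicit two points the paper only asserts: that this decomposition is the only nontrivial one (via primitive edge vectors and zero-sum subsets), and that a monomial factor must be constant (because of the terms $\delta x$ and $\beta y$).
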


{\it Proof.} The figure 

\begin{picture}(120,80)
\put(0,-30){
\begin{picture}(0,400)
\put(-8,-8){$0$}
\put(0,0){\vector(0,1){80}}\put(-10,27){$1$}\put(-10,57){$2$}
\put(0,0){\vector(1,0){80}}\put(27,-10){$1$}\put(57,-10){$2$}
\put(30,0){\vector(1,1){30}}
\put(60,30){\vector(-1,1){30}}
\put(30,60){\vector(-1,-1){30}}
\put(0,30){\vector(1,-1){30}}
\end{picture}}
\put(130,-30){
\begin{picture}(0,400)
\put(-8,-8){$0$}
\put(0,0){\vector(0,1){80}}\put(-10,27){$1$}
\put(0,0){\vector(1,0){80}}\put(27,-10){$1$}
\put(0,0){\vector(1,1){30}}
\put(0,30){\vector(1,-1){30}}
\end{picture}}
\end{picture}
\bigskip\bigskip\bigskip\bigskip\\
on the left hand side shows the Newton polygon of the polynomial $P_1(x,y)$. The Newton polygons of polynomials are located entirely in the first quadrant of the Cartesian plane.

In our case, the square, which is the Newton polygon of the polynomial $P_1(x,y)$, can be represented either as the Minkowski sum of itself with the point $(0,0)$, or as the Minkowski sum of two vectors in the right hand side figure. The first case is degenerate and corresponds to constant factoring. This leaves only the second case, which we will check explicitly. 

Let us suppose that
$$
P_1(x,y)=\alpha x^2 y+\gamma xy^2-rxy+\delta x+\beta y=(Ax+By)(D+Exy). 
$$
It is easy to see that we have not term $-rxy$ on the right hand side of the equality above. That is a contradiction. $\Box$

\begin{lemma}\label{P2-irr}
The polynomial 
$$
P_2(x,y)=xy\left(\left( \alpha x+\frac{\beta}{x}\right)\left(\gamma y+\frac{\delta}{y}\right)-r\right)=\alpha\gamma x^2y^2+\alpha\delta x^2+\beta\gamma y^2-rxy+\beta\delta
$$
with $\alpha\beta\gamma\delta r\not=0$, $r^2\not=4\alpha\beta\gamma\delta$ is irreducible.
\end{lemma}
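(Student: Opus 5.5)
I would argue directly rather than through a full case analysis of Minkowski decompositions of the Newton polygon, which here is the square $[0,2]\times[0,2]$ and has many lattice decompositions. Instead, I would regard $P_2$ as a quadratic polynomial in $y$ over $\overline{\mathbb{F}}_p[x]$:
$$
P_2(x,y)=\gamma(\alpha x^2+\beta)\,y^2-rx\,y+\delta(\alpha x^2+\beta).
$$
Irreducibility is then tested in two steps. First I rule out factors depending only on $x$. Then I rule out factorizations into two factors that are each linear in $y$. Throughout I assume $p$ is odd, which is the case of interest in the paper, so that the quadratic formula applies.

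\emph{Step 1 (content).} A factor lying in $\overline{\mathbb{F}}_p[x]$ must divide all three coefficients $\gamma(\alpha x^2+\beta)$, $rx$ and $\delta(\alpha x^2+\beta)$. Since $r\neq 0$, any common divisor must divide $x$. But $x\nmid \alpha x^2+\beta$ because $\beta\neq 0$, so the content is $1$. By Gauss's lemma, any nontrivial factorization of $P_2$ must therefore be into two factors of $y$-degree exactly $1$. Such a factorization exists if and only if the discriminant
$$
D(x)=r^2x^2-4\gamma\delta(\alpha x^2+\beta)^2
$$
is a square in $\overline{\mathbb{F}}_p(x)$. Since $\overline{\mathbb{F}}_p[x]$ is integrally closed, this is equivalent to $D(x)$ being a square in $\overline{\mathbb{F}}_p[x]$.

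\emph{Step 2 (the discriminant).} Choose $s\in\overline{\mathbb{F}}_p$ with $s^2=\gamma\delta$. Then
$$
D(x)=\bigl(rx-2s(\alpha x^2+\beta)\bigr)\bigl(rx+2s(\alpha x^2+\beta)\bigr).
$$
Both factors are genuine quadratics, since their leading coefficients are $\mp 2s\alpha\neq0$.

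The two factors have no common root. Any common root would satisfy $2rx=0$, so $x=0$; but at $x=0$ the factors take the values $\mp2s\beta\neq0$.

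Hence $D$ is a perfect square only if each factor is, up to a constant, a perfect square. That happens exactly when the discriminant of $2s\alpha x^2\mp rx+2s\beta$ vanishes, i.e. when $r^2=16\alpha\beta\gamma\delta$. Under the nondegeneracy hypothesis on $r$, $D$ is not a square, so $P_2$ does not factor and is irreducible.

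\emph{Main obstacle: the exceptional value of $r$.} The only delicate point is identifying this exceptional value, and here my computation disagrees with the statement. It gives $r^2=16\alpha\beta\gamma\delta$, not $r^2=4\alpha\beta\gamma\delta$.

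A sanity check supports $16$. Take $\alpha=\beta=\gamma=\delta=1$ and $r=4$. Then
$$
x^2y^2+x^2+y^2-4xy+1=\bigl(xy-1+i(x-y)\bigr)\bigl(xy-1-i(x-y)\bigr),
$$
where $i^2=-1$, so $P_2$ is reducible at $r^2=16$.

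So I would carry out the proof with the condition $r^2\neq16\alpha\beta\gamma\delta$, which is presumably what is intended. Only finitely many values of $r$ are excluded either way, so the application via Proposition~\ref{PXX} is unaffected. I would also check separately the excluded characteristic $p=2$ if it is needed.
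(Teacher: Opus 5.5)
Your proof is correct, and so is your correction of the statement. The exceptional value is $r^2=16\alpha\beta\gamma\delta$. Your identity $x^2y^2+x^2+y^2-4xy+1=(xy-1)^2+(x-y)^2$ shows that Lemma~\ref{P2-irr} as stated fails for every $p\geq5$; for $p=3$ the two conditions coincide.

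The paper argues differently. It lists the Minkowski decompositions of the Newton square $[0,2]^2$ (two unit squares, two length-$2$ segments, a $1\times2$ rectangle plus a unit segment) and excludes each by comparing coefficients. Its slip is in the two-unit-squares case. After normalizing to $(A+Bx+Dy+Exy)(A-Bx-Dy+Exy)$, it reads the $xy$-coefficient as $2AE$, dropping the cross terms; the coefficient is actually $2AE-2BD$. From $A^2=\beta\delta$, $E^2=\alpha\gamma$, $-B^2=\alpha\delta$, $-D^2=\beta\gamma$ one gets $(BD)^2=(AE)^2$. Then $r\neq0$ forces $BD=-AE$ and $r=-4AE$, which is exactly your condition. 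So the proof of Theorem~\ref{th-est} should exclude $r=\pm4\sqrt{\alpha\beta\gamma\delta}$ rather than $\pm2\sqrt{\alpha\beta\gamma\delta}$. As you say, Proposition~\ref{PXX} tolerates finitely many exceptional $r$, so nothing downstream breaks.

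Your route (primitivity and Gauss's lemma, then the coprime factorization of the discriminant) is shorter and gives an exact if-and-only-if criterion in one computation. It also sidesteps two soft spots in the paper's argument:
\begin{itemize}
\item The list of Minkowski decompositions is asserted without justification. It is complete, because every summand of an axis-parallel rectangle is an axis-parallel rectangle, segment or point.
\item In the rectangle-plus-segment cases, the stated reason (``$P_2$ has the term $\beta\gamma y^2$'') is incomplete, since the $y^2$-coefficient $\gamma(\alpha x_0^2+\beta)$ of $P_2(x_0,y)$ can vanish. Your content computation is the correct version of that step.
\end{itemize}
What the Newton-polygon route buys, once repaired, is independence of the characteristic. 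For $p=2$ the two-squares case gives $xy$-coefficient $2(AE-BD)=0$, so there is no exceptional $r$ at all, whereas your discriminant criterion needs $p$ odd. That restriction is harmless here, since the application forces $p$ to be large.
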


{\it Proof.} Consider the following five figures.

\begin{picture}(120,80)
\put(0,-30){
\begin{picture}(0,400)
\put(-8,-8){$0$}
\put(0,0){\vector(0,1){80}}\put(-10,22){$1$}\put(-10,47){$2$}
\put(0,0){\vector(1,0){80}}\put(22,-10){$1$}\put(47,-10){$2$}
\put(0,0){\vector(1,0){50}}
\put(50,0){\vector(0,1){50}}
\put(50,50){\vector(-1,0){50}}
\put(0,50){\vector(0,-1){50}}
\end{picture}}
\put(110,-30){
\begin{picture}(0,400)%
\put(-8,-8){$0$}
\put(0,0){\vector(0,1){80}}\put(-10,22){$1$}\put(-10,47){$2$}
\put(0,0){\vector(1,0){80}}\put(22,-10){$1$}\put(47,-10){$2$}
\put(0,0){\vector(1,0){25}}
\put(25,0){\vector(0,1){25}}
\put(25,25){\vector(-1,0){25}}
\put(0,25){\vector(0,-1){25}}
\end{picture}}
\put(220,-30){
\begin{picture}(0,400)%
\put(-8,-8){$0$}
\put(0,0){\vector(0,1){80}}\put(-10,22){$1$}\put(-10,47){$2$}
\put(0,0){\vector(1,0){80}}\put(22,-10){$1$}\put(47,-10){$2$}
\put(0,0){\vector(1,0){50}}
\put(0,0){\vector(0,1){50}}
\end{picture}}
\end{picture}
\bigskip\bigskip\bigskip\bigskip\\
\begin{picture}(120,80)
\put(0,-30){
\begin{picture}(0,400)
\put(-8,-8){$0$}
\put(0,0){\vector(0,1){80}}\put(-10,22){$1$}\put(-10,47){$2$}
\put(0,0){\vector(1,0){80}}\put(22,-10){$1$}\put(47,-10){$2$}
\put(0,0){\vector(1,0){25}}
\put(25,0){\vector(0,1){50}}
\put(25,50){\vector(-1,0){25}}
\put(0,50){\vector(0,-1){50}}
\end{picture}}
\put(110,-30){
\begin{picture}(0,400)%
\put(-8,-8){$0$}
\put(0,0){\vector(0,1){80}}\put(-10,22){$1$}\put(-10,47){$2$}
\put(0,0){\vector(1,0){80}}\put(22,-10){$1$}\put(47,-10){$2$}
\put(0,0){\vector(1,0){50}}
\put(50,0){\vector(0,1){25}}
\put(50,25){\vector(-1,0){50}}
\put(0,25){\vector(0,-1){25}}
\end{picture}}
\end{picture}
\bigskip\bigskip\bigskip\bigskip\\
The square on the first graphic with a sides of the length two on shows the Newton polygon of the polynomial $P_2(x,y)$. The square, which is the Newton polygon of the polynomial $P_2(x,y)$, can be represented either as the Minkowski sum of itself with the point $(0,0)$, or as the Minkowski sum of two unit squares in the second figure or as the Minkowski sum of two vectors in the third figure or as a sum of a rectangle and unit vector on forth and fifth figures. The first case is degenerate and corresponds to constant factoring. This leaves only the second, third, forth and fifth cases, which we will check explicitly. 

Let us suppose that
$$
P_2(x,y)=\alpha\gamma x^2y^2+\alpha\delta x^2+\beta\gamma y^2-rxy+\beta\delta=
$$
$$
=(A+Bx+Dy+Exy)(F+Gx+Hy+Kxy).
$$
Without loss of generality, we suppose that $A=F$, otherwise we can multiply the first bracket by $\sqrt{\frac{F}{A}}$ and the second bracket by $\sqrt{\frac{A}{F}}$ $\left(\sqrt{\frac{A}{F}},\sqrt{\frac{F}{A}}\in\overline{\mathbb{F}}_p\right)$. If $A=F$ then $B+G=D+H=0$ and $EG+BK=B(K-E)=0$ as coefficients of $x,y$ and $xy$. Thus we have that
$$
P_2(x,y)=(A+Bx+Dy+Exy)(A-Bx-Dy+Exy).
$$
By coefficients we obtain that
$$
A^2=\beta\delta,\quad E^2=\alpha\gamma,\quad 2AE=-r
$$
and that
$$
4A^2E^2=r^2=4\alpha\beta\gamma\delta.
$$
That is a contradiction.

It remains to study the case shown in the third figure. 
Let us suppose that
$$
P_2(x,y)=(A+Bx+Dx^2)(E+Fy+Gy^2).
$$
It is known that $ADEG\not= 0$, $BF=-r\not=0$ and all coefficients are non-zero. The coefficient of $x$ on the one hand is equal to $BE$, but on the other hand it is equal to zero. 

Let us consider the last two cases. The case
$$
P_2(x,y)=(A+Bx+Dy+Exy+Dy^2+Fxy^2)(G+Fx)
$$
cannot be realized because the polynomial $f(y)=P(-\frac{F}{G},y)\equiv 0$, but it is impossible because $P_2(x,y)$ has the term $\beta\gamma y^2$. The case
$$
P_2(x,y)=(A+Bx+Dy+Exy+Dx^2+Fx^2y)(G+Fy).
$$
cannot be realized because $g(x)=P(x,-\frac{G}{F})\equiv 0$, but it is impossible because $P_2(x,y)$ has the term $\alpha\beta y^2$.
That is a contradiction, and the lemma is proved. $\Box$

\bigskip

{\it Proof of Theorem \ref{th-est}.}
$K$-Fibonacci sequence:
\begin{eqnarray}
F_{n+2}=KF_{n}+F_{n+1},\quad F_0=0,\,\, F_1=1,\quad n=0,1,2,3,\ldots    
\end{eqnarray}
can be represented as the union of two recurrent subsequences $\mathcal{F}_p'$ and $\mathcal{F}_p''$:
$$
F_0,\ldots,F_{2k},\ldots
$$
with $F_k'=F_{2k}$ and
$$
F_1,\ldots,F_{2k+1},\ldots
$$
with $F_k''=F_{2k+1}$, and $\mathcal{F}_p=\mathcal{F}_p'\cup\mathcal{F}_p''$, because the $K$-Fibonacci sequence is the union:
$$
F_1',F_1'',F_2',F_2'',\ldots,F_k',F_k'',\ldots
$$ 
Sequences $\mathcal{F}_p'$ and $\mathcal{F}_p''$ are both recurrent and
$$
F_{k+2}'=(K^2+2)F_{k+1}'-F_k',\quad F_1'=0,\, F_2'=K;
$$ 
$$
F_{k+2}''=(K^2+2)F_{k+1}''-F_k'',\quad F_1''=1,\, F_2''=K^2+1.
$$
We obtain the lower bounds of cardinalities of sets $\mathcal{F}_p'+\mathcal{F}_p'$ and $\mathcal{F}_p'\cdot\mathcal{F}_p'$ and use that:
$$
\mathcal{F}_p+\mathcal{F}_p\supset \mathcal{F}_p'+\mathcal{F}_p'\Rightarrow |\mathcal{F}_p+\mathcal{F}_p|\geqslant |\mathcal{F}_p'+\mathcal{F}_p'|;
$$
$$
\mathcal{F}_p\cdot\mathcal{F}_p\supset (\mathcal{F}_p'\cdot\mathcal{F}_p') \Rightarrow |\mathcal{F}_p\cdot\mathcal{F}_p|\geqslant |\mathcal{F}_p'\cdot\mathcal{F}_p'|.
$$
Characteristic equation of the sequence $\mathcal{F}_p'$ is $\lambda^2-(K^2+1)\lambda+1=0$, 
$$
\lambda_1=\lambda_2^{-1}=\frac{-(K^2+1)+\sqrt{(K^2+1)^2-4}}{2}.
$$ 

To estimate $|\mathcal{F}_p'+\mathcal{F}_p'|$ we apply Proposition \ref{PXX} with polynomial $P(x,y)=x+y$. By Lemma \ref{P1-irr} we obtain that polynomials $Q_r(x,y)=P_1(x,y)$ are irreducible for any $r\not= 0$. By the bound of Proposition \ref{PXX} we obtain the estimate:
$$
|\mathcal{F}_p+\mathcal{F}_p|\geqslant |\mathcal{F}_p'+\mathcal{F}_p'|>\Theta |\mathcal{F}_p'|^{4/3}=\frac{\Theta}{2\sqrt[3]{2}} |\mathcal{F}_p|^{4/3}
$$
To estimate $|\mathcal{F}_p'\cdot\mathcal{F}_p'|$ we apply Proposition \ref{PXX} with polynomial $P(x,y)=xy$. We obtain by Lemma \ref{P2-irr} that polynomials $Q_r(x,y)=P_2(x,y)$ are irreducible for any $r\not= 0,\pm 2\sqrt{\alpha\beta\gamma\delta}$. By the bound of Proposition  \ref{PXX} we obtain the estimate:
$$
|\mathcal{F}_p\cdot\mathcal{F}_p|\geqslant |\mathcal{F}_p'\cdot\mathcal{F}_p'|>\Theta |\mathcal{F}_p'|^{4/3}=\frac{\Theta}{2\sqrt[3]{2}} |\mathcal{F}_p|^{4/3}.\,\, \Box
$$

\section{Acknowledges}

The authors are grateful to Alexander Derevtsov for reading the manuscript and identifying typos.

Vyugin I.V.\\
HSE University,\\
{\it ilyavyugin@yandex.ru}\\
\\
Dutta S.\\
HSE University,\\
{\it duttamathe@gmail.com}\\


\begin{thebibliography}{99}

\bibitem{AV}
Aleshina, S.\,A., V’yugin, I.\,V. On a Polynomial Version of the Sum-Product Problem for Subgroups // Math Notes 113, 3-9 (2023). https://doi.org/10.1134/S0001434623010017


\bibitem{B-L}
Benfield, B., Lippard, O. {\em Connecting Zeros in Pisano Periods to Prime Factors of K-Fibonacci Numbers} // The Fibonacci Quarterly, 63(2), 240-258 (2025). https://doi.org/10.1080/00150517.2025.2460555

\bibitem{C-Z}
P. Corvaja, U. Zannier, {\em Greatest common divisor of $u-1$, $v-1$ in positive characteristic and rational points on curves over finite fields} //
J. Eur. Math. Soc., 15:5, 1927-1942, 2013.

\bibitem{KSV}
Konyagin S.\,V., Shparlinski I.\,E., Vyugin I.\,V., Polynomial Equations in Subgroups and Applications // {\it In: A. Avila, M. Th. Rassias, Y. Sinai (eds.), Analysis at Large, Dedicated to the Life and Work of Jean Bourgain, Springer, 2022, pp. 273-297.}

\bibitem{MV}
S. Makarychev, I. Vyugin, {\em Solutions of Polynomial Equations in Subgroups of $\mathbb{F}_p$} //
Arnold Math J. 5, 105-121 (2019).


\bibitem{PS}
Parmanand Singh, {\em The so-called Fibonacci numbers in ancient and medieval India} // Historia Mathematica, V. 12, Issue 3, August 1985, Pages 229-244.





\end{thebibliography}
\end{document}